\newtheorem{theorem}{Theorem}[section]
\newtheorem{lemma}[theorem]{Lemma}
\theoremstyle{definition}
\newtheorem{proposition}[theorem]{Proposition}
\newtheorem{corollary}[theorem]{Corollary}
\theoremstyle{remark}
\newtheorem{remark}[theorem]{Remark}
\numberwithin{equation}{section}
\begin{document}

\author{Andrei Pavelescu}
\address{Department of Mathematics, Oklahoma State University, Stillwater, OK 74078-1058, USA}
\email{andrei.pavelescu@okstate.edu}

\date{\today}
\thanks{The author was partially supported by NSF grant
DMS 1001962.}

\title{Derangements in cosets of primitive permutation groups}
 
\maketitle

\begin{abstract}  Motivated by questions arising in connection with branched coverings of connected smooth projective curves over finite fields, we study the proportion of fixed point free elements (derangements) in cosets of normal subgroups of primitive permutations groups. Using the Aschbacher--O'Nan--Scott theorem for primitive groups to partition the problem, we provide complete answers for affine groups and groups which contain a regular normal nonabelian subgroup.

\end{abstract}

\section{Introduction}

\hspace{.25in}The study of the fixed points of permutations has a long history, starting with a probability theorem of Montmort \cite{M} who proved that the average number of permutations of size $n$ with $k$ fixed points tends to $1/(ek!)$. Further connections with probability theory come in relation with card shuffling.\\

In an algebraic context, let $A$ be a transitive permutation group acting on a set $\Omega$ with $n$ elements. Let $S_0$ denote the set of fixed point free elements of $G$. By a classical result of C. Jordan \cite{Jo}, $S_0$ is nonempty. Motivated by number theoretic applications such as the number field sieve, H.W. Lenstra, Jr. \cite{BLP} asked for a lower bound for
\[s_0:=\frac{|S_0|}{|A|}.\] Cameron and Cohen \cite{CC} proved that $s_0\ge 1/n$ with equality if and only if $A$ is a Frobenius group of order $n(n-1)$, with $n$ a prime power.  Guralnick and Wan \cite{GW} proved that if $s_0>1/n$, then the next bound is $s_0=2/n$, with equality for a Frobenius group of order $n(n-1)/2$ with $n$ an odd prime power, $\mathbb{Z}/3\mathbb{Z}$ or $A_5$.\\\\
The question of studying $s_0$ arises in a more arithmetic setting.\\

Let $\mathbb{F}_q$ be a finite field of characteristic $p$ and let $f(T)\in \mathbb{F}_q[T]$ be a polynomial of degree $n>1$ which is not a polynomial in $T^p$. S. Chowla \cite{Ch} asked for an estimation of $V_f:=|f(\mathbb{F}_q)|$. A result of Birch and Swinnerton-Dyer \cite{BS} shows that, provided the Galois group of $f(T)-t=0$ over $\overline{\mathbb{F}}_q(t)$ is $S_n$, then:
\[V_f= \left(\sum_{k=1}^n\frac{(-1)^{k-1}}{k!} \right)q+ O(\sqrt{q}),\]
where the constant term in the error does not depend on $f$, only on $n$.  Unless $f$ is a permutation polynomial ($V_f=q$), $V_f<q$ and a known elementary upper bound is \[V_f\le q-\frac{q-1}{n}.\]
When interested in asymptotic upper bounds, it turns out they do depend on $f$.\\

Let $A$ be the Galois group of $f(T)-t=0$ over $\mathbb{F}_q(t)$ and let $G$ be the Galois group of $f(T)-t=0$ over $\overline{\mathbb{F}}_q(t)$. Both groups act transitively on the set of $n$ roots of $f(T)-t=0$. Furthermore, the geometric monodromy group $G$ is a normal subgroup of the arithmetic monodromy group $A$, with $A/G$ cyclic generated by $xG$. If $S_0$ denotes the set of fixed point free elements in $xG$, then the Cebotarev density theorem for function fields yields: 
\[V_f= (1-\frac{|S_0|}{|G|})q+O(\sqrt{q}),\] with the constant error term depending only on $n$. Therefore the problem is reduced to the understanding of $s_0=|S_0|/|G|$.\\

Unless $f$ is an exceptional polynomial (it induces bijections in arbitrarily large degree extensions of $\mathbb{F}_q$), $s_0>0$ with the next bound $s_0= 1/n$ holding if $A=G$ is a Frobenius group of order $n(n-1)$ with $n$ a prime power (Lenstra).  Not surprisingly, the next bound for $s_0$ is $2/n$ as given in:\\

\begin{theorem}[Guralnick--Wan]
Let $f(T)$ be a polynomial over $\mathbb{F}_q$ of degree $n>6$ which is not a polynomial in $T^p$. If $s_0>1/n$, then $s_0\ge 2/n$ with equality holding iff $A=G$ is a Frobenius group of order $n(n-1)/2$ with n a prime power. In particular, $V_f \le (1-2/n)q+O_n(\sqrt{q})$ unless $f$ is exceptional or $A=G$ is a Frobenius group of order $n(n-1)$.
\end{theorem}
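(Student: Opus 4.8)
The plan is to translate the statement into pure permutation-group theory and then run the Aschbacher--O'Nan--Scott machinery. The object of study is the proportion $s_0 = |S_0|/|G|$ of derangements in the distinguished coset $xG$, where $G \trianglelefteq A$, the quotient $A/G$ is cyclic generated by $xG$, and both $G$ and $A$ act transitively on the set $\Omega$ of $n$ roots. The first step is a coset form of Burnside's lemma: since $G$ is transitive we have $A = A_\omega G$, so the point stabilizer $A_\omega$ surjects onto $A/G$ and therefore meets the coset $xG$ in a set of size $|G_\omega| = |G|/n$. Summing over $\omega \in \Omega$ gives $\sum_{g \in xG}\mathrm{fix}(g) = n\,|G_\omega| = |G|$, so the \emph{average} number of fixed points over $xG$ is exactly $1$; this identity is the quantitative backbone of all the estimates below. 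Non-exceptionality of $f$ is precisely the statement $s_0 > 0$, and the task is to classify the transitive pairs $(G, xG)$ whose derangement proportion is small.

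The decisive input is arithmetic-geometric. Because $f$ is a separable polynomial (not a polynomial in $T^p$), the cover $x \mapsto f(x)$ is totally ramified over $\infty$, so the inertia group $I_\infty \le G$ there is a transitive subgroup whose tame quotient is cyclic of order prime to $p$; when $p \nmid n$ this forces an \emph{$n$-cycle} in $G$, while the wildly ramified case $p \mid n$ produces instead a transitive $p$-subgroup and is exactly the source of the prime-power degrees $n = p^a$ appearing in the extremal examples. I would then invoke the classification (Burnside--Schur--Feit, resting on the classification of finite simple groups and the Aschbacher--O'Nan--Scott reduction) of primitive groups of degree $n$ admitting such a transitive inertia subgroup. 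After dispatching the imprimitive (decomposable) case — where the block system, together with the transitive inertia subgroup, keeps the derangement proportion comfortably above $2/n$ for $n > 6$ — the primitive group $G$ is one of: (i) an affine group $G \le \mathrm{A\Gamma L}(1, p^a)$ of degree $n = p^a$; (ii) $A_n$ or $S_n$; (iii) $\mathrm{PGL}_d(q) \le G \le \mathrm{P\Gamma L}_d(q)$ on the $(q^d-1)/(q-1)$ points of projective space; or (iv) one of the finitely many exceptions $\mathrm{PSL}_2(11)$, $M_{11}$, $M_{23}$.

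The argument then proceeds case by case, always inside the coset $xG$. In case (ii) the derangement proportion of $G$ already tends to $1/e$, and the same holds for any coset, so $s_0$ is bounded below by a constant and hence exceeds $2/n$ for $n > 6$; case (iv) is a finite check that $s_0 > 2/n$. The sharp behavior lives in case (i), where $A$ and $G$ are subgroups of the solvable group $\mathrm{A\Gamma L}(1, p^a)$: here the derangements in each coset can be counted explicitly, and one reads off that $s_0 = 1/n$ exactly for the sharply $2$-transitive Frobenius group of order $n(n-1)$, that the next attainable value is $s_0 = 2/n$ for its index-two subgroup of order $n(n-1)/2$, and that there is a genuine gap below every larger value. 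Collecting the cases gives $s_0 \ge 2/n$, with equality forcing $A = G$ to be the stated Frobenius group.

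\textbf{Main obstacle.} I expect the real work to be case (iii), the projective groups, where one must prove the derangement proportion strictly exceeds $2/n = 2(q-1)/(q^d-1)$. A derangement in $xG$ is a (possibly $\sigma$-semilinear, when $xG$ induces a field automorphism $\sigma$ through $A/G$) transformation with no invariant point of $\mathbb{P}^{d-1}(\mathbb{F}_q)$, i.e. no $\sigma$-eigenline over $\mathbb{F}_q$; the twist by the Frobenius generator of $A/G$ is what makes this delicate. Lower-bounding the number of such fixed-point-free elements calls for counting the elements of a $\mathrm{GL}_d$-coset according to the factorization type of their (twisted) characteristic polynomial, organized through the cycle index and Möbius inversion over the lattice of invariant subspaces, and then verifying that the fixed-point-free proportion beats $2/n$ uniformly in $d$ and $q$. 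Once every branch yields $s_0 \ge 2/n$ with equality only in the affine Frobenius group of order $n(n-1)/2$, the stated estimate $V_f \le (1 - 2/n)q + O_n(\sqrt q)$ is immediate from the Chebotarev formula $V_f = (1 - s_0)q + O_n(\sqrt q)$ recorded above.
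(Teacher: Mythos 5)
First, a point of reference: the paper does not prove this statement at all. It is quoted from Guralnick--Wan \cite{GW} as motivation, with the sole remark that its proof uses the classification of finite simple groups; the paper's own contribution (Sections 3--5) is a generalization to settings without a totally ramified rational point. So your proposal can only be measured against \cite{GW} itself, whose overall strategy --- pass to the monodromy groups, exploit the totally ramified point at infinity, reduce to primitive groups, invoke a CFSG-based classification, and locate the extremal Frobenius examples among affine groups --- your outline does follow, and your coset averaging identity is exactly Lemma \ref{2} of this paper. The problem is that at the two places where the real content lies, your argument either stops or goes wrong. You yourself flag case (iii), $\mathrm{PGL}_d(q)\le G\le \mathrm{P\Gamma L}_d(q)$, as the main obstacle, and then offer only a description of the counting that would be required (cycle-index and M\"obius inversion over invariant subspaces, twisted by a field automorphism, uniform in $d$ and $q$); none of it is carried out, so the hardest branch of your case analysis is simply open. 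Likewise, dismissing the decomposable case as ``comfortably above $2/n$'' is an assertion, not an argument; in \cite{GW} this reduction (their Lemma 3.5 and its corollary) is a separate induction that takes genuine work.

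Second, your case (i) is wrong as stated, and this is where the equality cases of the theorem live. No classification confines the affine possibilities to $\mathrm{A\Gamma L}(1,p^a)$. Feit's $n$-cycle theorem applies only in the tame case $p\nmid n$, and there the affine examples have $n=p$ prime; in the wild case $p\mid n$ the relevant classification (\cite{FGS} and its sequels) allows genuinely higher-dimensional primitive affine groups $A\le \mathrm{AGL}_d(p)$ with irreducible point stabilizer --- these occur, for instance, for additive/linearized polynomials of degree $p^d$ --- and for such groups there is no ``explicit count'' inside a solvable one-dimensional group. Eliminating them (or showing $s_0>2/n$) requires minimal-degree bounds from Lang's theorem together with a transvection/reflection analysis resting on McLaughlin's theorem \cite{Mc}; that is precisely what Section 4 of the present paper does in its more general setting, and what the corresponding part of \cite{GW} does. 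Your description of wild ramification is also inaccurate: the wild inertia group $P$ is a normal $p$-subgroup of the transitive inertia group $I=P\rtimes C$, but $P$ itself need not be transitive, so the case division ``$n$-cycle versus transitive $p$-subgroup'' is not the correct dichotomy. As it stands, the proposal reproduces the correct skeleton of the Guralnick--Wan proof but establishes neither the inequality in the projective case nor the classification of the equality case in the affine case.
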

 The proof of this theorem uses the classification of finite simple groups.\\
 
 If the degree of $f$ is not divisible by the characteristic of $\mathbb{F}_q$,  which is the same as saying that $f$, seen as a morphism from $\mathbb{P}^1$ to $\mathbb{P}^1$, has tame ramification at $\infty$, then $s_0>1/6$ whenever $s_0>0$ \cite{GW} . Thus either $f$ is bijective or $V_f \le (5/6)q+O_n(\sqrt{q})$.
 
 If all ramification is tame, then Guralnick and Wan \cite{GW} proved that $s_0>0$ implies $s_0\ge 16/63$ and the bound is the best possible.\\
 
Guralnick and Wan \cite{GW} generalized these results  to branched coverings of smooth projective curves defined over a finite field. Reducing to the case where the covering is indecomposable (the corresponding arithmetic monodromy group is primitive), the authors concluded:\\

\begin{theorem}[Guralnick--Wan]
Let $\alpha: \Omega\rightarrow Y$ be a separable branched covering of degree $n$ with $\Omega,Y,\alpha$ defined over $\mathbb{F}_q$. Assume that one of the branch points is totally ramified and is $\mathbb{F}_q$-rational. Let $p$ be the characterisitic of $\mathbb{F}_q$. Let $A$ be the arithmetic monodromy group of the covering and $G$ the geometric monodromy group. Then one of the following holds:
\begin{enumerate}
\item[(a)] $r_2=0$, $s_0=0$ and the covering is exceptional;
\item[(b)] $r_2=1$, $s_0=1/n$ and $A=G$ is Frobenius of order $n(n-1)$ with $n$ a prime or $p^a$;
\item[(c)] $r_2=2$, $s_0=2/n$ and $A=G$ is Frobenius of order $n(n-1)/2$ with $n$ an odd prime or $p^a$ (with $p>2$);
\item[(d)] $s_0>2/n$; or
\item[(e)] $n\le 6$, $A=G$ and $1/n\le s_0 \le 2/n$ or $n=4$, $|A/G|=2$ and $s_0= 2/4$. 

\end{enumerate}\vspace{.1in}
\end{theorem}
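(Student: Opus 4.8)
The plan is to translate the geometry into permutation-group data and then run a classification. The covering $\alpha$ determines a transitive group $A\le S_n$ acting on the fiber $\Omega$ (the $n$ sheets) of the geometric generic point, together with the normal subgroup $G\trianglelefteq A$ (geometric monodromy) and the cyclic quotient $A/G=\langle xG\rangle$ generated by Frobenius. Writing $\pi(g)=|\mathrm{Fix}_\Omega(g)|$ for the permutation character, the Chebotarev density/Lang--Weil count identifies $s_0$ with the proportion of derangements in the coset $xG$, and the relevant invariant is the second factorial moment
\[
r_2\;=\;\frac{1}{|G|}\sum_{g\in xG}\pi(g)\bigl(\pi(g)-1\bigr),
\]
i.e. the number of $G$-orbits on off-diagonal pairs $(\omega,\omega')$, $\omega\ne\omega'$, left invariant by $x$. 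Since $G$ is transitive one has $\tfrac1{|G|}\sum_{g\in xG}\pi(g)=1$, and this mean-$1$ identity together with $\pi\ge0$ controls the small-$r_2$ behaviour. The hypothesis of a totally ramified $\mathbb F_q$-rational branch point feeds in the key structural fact: the inertia $I\le G$ at that point is transitive on $\Omega$, so in the tame case ($p\nmid n$) $I$ is cyclic and $G$ contains a full $n$-cycle, while in general $I$ is a transitive $p$-group-by-cyclic subgroup. I would first reduce to $A$ primitive: block systems correspond to intermediate coverings, so a decomposable $\alpha$ factors and one inducts.

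Next I would dispose of the extremes by moment arithmetic and by citing the bounds recalled above. If $s_0=0$ then $\pi(g)\ge1$ on the whole coset; with $\tfrac1{|G|}\sum\pi(g)=1$ this forces $\pi\equiv1$, whence $r_2=0$ and $\alpha$ is exceptional, giving (a). For $s_0>0$ the coset contains a derangement, the totally ramified point supplies one of minimal fixed-point count, and the coset form of the Cameron--Cohen/Lenstra bound yields $s_0\ge 1/n$; its equality case gives $A=G$ sharply $2$-transitive Frobenius of order $n(n-1)$, for which one computes directly $r_2=1$, giving (b). If instead $s_0>1/n$, I would invoke the Guralnick--Wan theorem for $n>6$: then $s_0\ge 2/n$, with equality exactly when $A=G$ is Frobenius of order $n(n-1)/2$, where $r_2=2$, giving (c); everything else lands in $s_0>2/n$, which is (d).

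It remains to reconcile these outcomes with the inertia constraint and to treat small degrees. For the Frobenius cases the kernel is regular of order $n$ and (finite sharply $2$-transitive structure) elementary abelian, so the $n$-cycle of the tame case forces the kernel cyclic, i.e. $n$ prime; the $n=p^a$ possibilities are precisely the wild ones, where the transitive inertia is the regular elementary-abelian kernel, and the oddness condition (with $p>2$) in (c) is the index-$2$ complement in the order-$(n-1)$ torus. The degrees $n\le6$, outside the range of the cited theorem, I would settle by direct enumeration of the finitely many primitive groups of degree at most $6$ admitting a transitive cyclic-by-$p$ inertia subgroup; this produces the residual list (e), including the isolated configuration $G=A_4\trianglelefteq A=S_4$ in characteristic $2$, whose odd coset consists of six $4$-cycle derangements out of twelve elements, so $s_0=1/2$.

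The main obstacle is hidden inside the appeal to the Guralnick--Wan theorem, namely the uniform separation $s_0>2/n$ for every primitive group outside the Frobenius and small-degree list. This is where the Aschbacher--O'Nan--Scott theorem and the classification of finite simple groups are unavoidable: one runs through the affine, almost simple, diagonal, product and twisted-wreath types, uses the (CFSG-dependent) classification of primitive groups containing an $n$-cycle---which is very restrictive, essentially $S_n$, $A_n$ with $n$ odd, the projective groups $\mathrm{PGL}_d$ and $\mathrm{PSL}_d$ on the points of $\mathbb P^{d-1}$, together with a short sporadic list such as $\mathrm{PGL}(2,11)$ and certain Mathieu groups---and then bounds derangement proportions via fixed-point-ratio estimates in each family. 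The genuinely new curve-specific difficulty is the wild degrees $n=p^a$, where the clean $n$-cycle is replaced by the far less rigid transitive inertia, so the classification input weakens and the derangement estimates must be carried through by hand; I expect this to be the most delicate part of the argument.
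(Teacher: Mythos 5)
This statement is not proved in the paper at all: it is quoted verbatim from Guralnick--Wan \cite{GW} as background motivation, and the paper's own contribution (Sections 2--5) is a different result, namely a version \emph{without} the totally-ramified-rational-point hypothesis, proved only for the affine case and the regular nonabelian socle case. So there is no internal proof to compare your proposal against; it can only be judged on its own merits and against \cite{GW}.

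Judged that way, your outer skeleton is sound and does mirror the real strategy: the Chebotarev translation of $s_0$, the mean-one identity $\frac{1}{|G|}\sum_{g\in xG}\pi(g)=1$ forcing $\pi\equiv 1$ when $s_0=0$ (case (a)), the coset moment bound $s_0\ge r_2/n$ with its Frobenius equality case (cases (b), (c)), the reduction to primitive via intermediate coverings, and the finite enumeration for $n\le 6$ including the correct $S_4/A_4$ computation. But there is a genuine circularity at the pivot of the argument. The step ``invoke the Guralnick--Wan theorem for $n>6$: then $s_0\ge 2/n$, with equality exactly when $A=G$ is Frobenius of order $n(n-1)/2$'' cannot be carried out as stated: the only previously available form of that result (Theorem 1.1 in this paper) is for \emph{polynomials}, i.e.\ coverings of $\mathbb{P}^1$ totally ramified over a rational point at infinity, whereas the statement you are proving is precisely the generalization to arbitrary branched coverings with a totally ramified rational point. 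The group-theoretic core --- the uniform separation $s_0>2/n$ for every primitive non-Frobenius group admitting a transitive inertia subgroup of the appropriate shape, obtained via Aschbacher--O'Nan--Scott, the CFSG classification of primitive groups containing an $n$-cycle (and its weakening in the wild case $n=p^a$), and fixed-point-ratio estimates --- is the entire mathematical content of the theorem, and your proposal defers it rather than supplies it. You flag this honestly as ``the main obstacle,'' but acknowledging the gap does not close it: as written, the proposal reduces the theorem to itself.
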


In the same paper, the authors commented that ``...there should be a version of the previous result without the assumption that we are dealing with monodromy groups of polynomials (or more generally coverings with a totally ramified rational point)''. In this paper, we study this situation and provide answers for the affine case and the regular nonabelian normal subgroup case.

\section{Machinery}\vspace{.1in}

Let $A$ be a permutation group acting on a set $\Omega$ which has $n$ elements. Denote by $\mu(\ge 2)$ the minimal number of elements moved by a nonidentity element of $A$. For an element $x$ of $A$, let $Fix(x)$ denote the set of elements of $\Omega$ that are fixed by $x$.
\vspace{.1in}

\begin{lemma} The number of orbits of $A$ acting on $\Omega$ is less or equal to $n-\frac{\mu}{2}$.
\label{1}
\end{lemma}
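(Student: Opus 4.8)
The plan is to count orbits via the Cauchy--Frobenius (Burnside) orbit-counting lemma and then control the fixed-point counts using the definition of $\mu$. Writing $t$ for the number of orbits of $A$ on $\Omega$, the lemma gives $t=\frac{1}{|A|}\sum_{x\in A}|Fix(x)|$, so the entire statement reduces to an averaging estimate on the numbers $|Fix(x)|$.

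The next step is to isolate the contribution of the identity from that of the remaining elements. The identity fixes all of $\Omega$, contributing $|Fix(1)|=n$. Every nonidentity $x\in A$ moves at least $\mu$ points by the very definition of $\mu$, hence fixes at most $n-\mu$ of them, so $|Fix(x)|\le n-\mu$ for each of the $|A|-1$ nontrivial elements. Substituting these bounds into the sum yields $t\le\frac{1}{|A|}\big(n+(|A|-1)(n-\mu)\big)$, which simplifies to $t\le n-\mu\big(1-\tfrac{1}{|A|}\big)$.

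The final step is to invoke the fact that $A$ is nontrivial: since $A$ contains an element moving $\mu\ge 2$ points (this is what makes $\mu$ well defined), we have $|A|\ge 2$ and therefore $1-\tfrac{1}{|A|}\ge\tfrac12$, turning the estimate into $t\le n-\mu/2$, as desired. I do not anticipate a genuine obstacle here: the argument is a direct application of Burnside's formula, and the only real content is the elementary observation that the factor $1-1/|A|$ is at least $\tfrac12$ once $|A|\ge 2$. The single point I would be careful to state explicitly is why $|A|\ge 2$, which is immediate from the existence of a nonidentity element realizing the minimum $\mu$.
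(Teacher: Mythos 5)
Your proof is correct and follows essentially the same route as the paper's: both apply Burnside's lemma, separate the identity's contribution of $n$ from the bound $|Fix(x)|\le n-\mu$ on the $|A|-1$ nonidentity elements, and conclude via $\frac{|A|-1}{|A|}\ge\frac12$. Your explicit justification that $|A|\ge 2$ (from the existence of a nonidentity element realizing $\mu$) is a point the paper leaves implicit, but the argument is the same.
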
\vspace{.1in}
\begin{proof} If $r$ is the number of orbits of $\Omega $ under the action of $A$,
then, by Burnside's Lemma
\[r|A|=\sum_{\Omega\in A}|Fix(x)|=n+\sum_{x\ne 1}|Fix(x)|\le n+ (|A|-1)(n-\mu)= |A|n-(|A|-1)\mu\Rightarrow\]
\[\Rightarrow r\le n-\frac{|A|-1}{|A|}\mu \le n-\frac{\mu}{2}.\]
\end{proof}
\begin{remark}Notice that the above inequality is strict unless $|A|=2$.
\end{remark}\vspace{.1in}
Let $G$ be a normal subgroup of $A$ such that $A/G$ is cyclic, generated by $x$.\vspace{.1in}
\begin{lemma} Let $r=r(\Omega)$ be the number of common $(A,G)$--orbits on $\Omega$. Then \[\frac{1}{|G|}\sum_{g\in xG}|Fix(g)|=r.\]\label{2} \end{lemma}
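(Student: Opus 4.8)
The plan is to evaluate the left-hand side by double counting and then to organize the count according to the $G$-orbits on $\Omega$. Writing $A_\omega=\{a\in A:a\omega=\omega\}$ and $G_\omega=A_\omega\cap G$, I would first swap the order of summation:
\[\sum_{g\in xG}|Fix(g)|=\sum_{g\in xG}\sum_{\omega\in\Omega}[g\omega=\omega]=\sum_{\omega\in\Omega}|xG\cap A_\omega|.\]
Since $G$ acts trivially on the set $\Omega/G$ of its own orbits, every element of the coset $xG$ induces the same permutation of $\Omega/G$; call it $\bar x$. I would then partition $\Omega$ into $G$-orbits and compute the contribution $\sum_{\omega\in\mathcal O}|xG\cap A_\omega|$ of each $G$-orbit $\mathcal O$ separately.

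Fix a $G$-orbit $\mathcal O$. If $\bar x$ does not fix $\mathcal O$, then $g\mathcal O=x\mathcal O\ne\mathcal O$ for every $g\in xG$, so no point of $\mathcal O$ can be fixed by any $g\in xG$ and the contribution is $0$. If $\bar x$ fixes $\mathcal O$, I claim each term $|xG\cap A_\omega|$ with $\omega\in\mathcal O$ equals $|G_\omega|$. The key point is nonemptiness: given $\omega\in\mathcal O$, we have $x\omega\in x\mathcal O=\mathcal O=G\omega$, so $x\omega=h\omega$ for some $h\in G$; then $h^{-1}x$ fixes $\omega$ and lies in $xG$ (using $G\trianglelefteq A$), whence $xG\cap A_\omega\ne\emptyset$. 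Being a nonempty intersection of the coset $xG$ with the subgroup $A_\omega$, it is a single coset of $A_\omega\cap G=G_\omega$, so it has exactly $|G_\omega|$ elements. By orbit--stabilizer applied to the transitive $G$-action on $\mathcal O$, $|G_\omega|=|G|/|\mathcal O|$, and summing over the $|\mathcal O|$ points gives a total contribution of $|G|$.

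Combining the two cases, $\sum_{g\in xG}|Fix(g)|=|G|\cdot r'$, where $r'$ is the number of $G$-orbits fixed by $\bar x$. It remains to identify $r'$ with $r$. Here I would invoke the hypothesis that $A/G$ is cyclic and generated by $xG$: a $G$-orbit $\mathcal O$ is fixed by $\bar x$ if and only if it is fixed setwise by $\langle xG\rangle=A/G$, i.e. by all of $A$; since $G$ already acts transitively on $\mathcal O$, this happens exactly when $\mathcal O$ is simultaneously a $G$-orbit and an $A$-orbit, that is, a common $(A,G)$-orbit. Hence $r'=r$, and dividing by $|G|$ yields the assertion. I expect the only delicate step to be the nonemptiness of $xG\cap A_\omega$ on a fixed orbit; everything else is bookkeeping, with the generator hypothesis doing the essential work in the final identification $r'=r$.
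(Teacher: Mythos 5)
Your proof is correct, and it shares its computational core with the paper's: both rest on the double count $\sum_{g\in xG}|Fix(g)|=\sum_{\omega\in\Omega}|xG\cap A_\omega|$ together with the observation that a nonempty intersection $xG\cap A_\omega$ is a single coset of $G_\omega$, hence has size $|G_\omega|$. The surrounding organization, however, is genuinely different. The paper first reduces without loss of generality to $A$ transitive, and then everything hinges on the claim that $G$ is transitive if and only if some element of $xG$ has a fixed point; the delicate direction of that claim is proved by an explicit inductive computation, writing an arbitrary element of $A$ as $x^tg_2$ and repeatedly conjugating by $x$ to push elements into $G$. You avoid both the reduction and the induction: you partition $\Omega$ into $G$-orbits, note that the entire coset induces a single permutation $\bar x$ of $\Omega/G$, compute that orbits moved by $\bar x$ contribute $0$ while fixed orbits contribute $|G|$ each, and then identify the $\bar x$-fixed $G$-orbits with the common $(A,G)$-orbits via the structural remark that the setwise stabilizer of such an orbit is a subgroup containing both $x$ and $G$, hence all of $A=\langle x,G\rangle$. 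That identification is exactly where the hypothesis that $xG$ generates $A/G$ enters your argument, playing the role that the inductive conjugation plays in the paper's; your version trades the paper's coordinate computation for a subgroup argument and treats intransitive $G$ uniformly rather than as a separate case, while the paper's route yields along the way the self-contained and reusable statement that transitivity of $G$ is equivalent to the existence of a fixed point in the generating coset. Two small points you should spell out: (i) that elements of $A$ permute the $G$-orbits at all, so that $\bar x$ is well defined, already uses normality of $G$, via $a(G\omega)=(aGa^{-1})(a\omega)=G(a\omega)$; and (ii) in the unfixed-orbit case, an element $g\in xG$ fixing a point of $\mathcal O$ would force $g\mathcal O=\mathcal O$ because distinct $G$-orbits are disjoint.
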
\vspace{.1in}
\begin{proof} Without loss of generality, one may assume $A$ is transitive.
(The orbits of the $A$-action form a partition of $\Omega$ which has as a
subpartition the orbits of the $G$-action).\\\\
\textit{Claim:} $G$ is transitive iff there exists $g\in G$
 such that $xg$ has a fixed point.\\

 Proof of claim: "$\Rightarrow$" Let $\alpha\in \Omega$. Since $G$ is transitive, $\exists g\in G$ such that $g(\alpha)= x^{-1}(\alpha)\Rightarrow xg(\alpha)=
 \alpha$.\\ 

 "$\Leftarrow$" Let $g\in G$, $\alpha\in \Omega$ such that $xg(\alpha)=\alpha\Rightarrow g^{-1}x^{-1}(\alpha)= \alpha \Rightarrow xg^{-1}x^{-1}(\alpha)=x(\alpha) \Rightarrow g_1(\alpha)=x(\alpha)$ for some $g_1\in G$, as $G$ is a normal subgroup. If $\beta$ is an arbitrary element of $\Omega$,
  since $A$ is transitive,
 there exists $h\in A$ such that $\beta=h(\alpha)$. Under the current assumptions on $A$ and $G$, there exist $t\in \mathbb{N}$ and $g_2\in G$ such that
  $h= x^tg_2$.
 It then follows that \[ \beta=x^tg_2(\alpha)= x^{t-1}xg_2x^{-1}x(\alpha)= x^{t-1}g_3(\alpha)=...=x^{t-i}g_{i+2}(\alpha)=...=g_{t+2}(\alpha)
 \] where inductively $g_{i+1}:= xg_ix^{-1}g_1\in G$, since $G$ is normal in $A$. Since $\beta$ was arbitrary, it follows that $G$ is transitive.\\

 By the claim, if $G$ is not transitive, both sides of the equation are 0. So we assume $G$ is transitive ($r$=1). Set \[Y=\{(xg,\omega)\in xG\times \Omega| xg(\omega) = \omega\},\] nonempty by the claim.
 Let $A_{\omega}$ and $G_{\omega}$ denote the corresponding point stabilizers. If $xg(\omega)= \omega,$ then $A_{\omega}\cap xG = xgG_{\omega}$,
  thus $|A_{\omega}\cap xG| = |G_{\omega}|.$ One has
  \[\sum_{g\in xG}|Fix(g)|=|Y|= \sum_{\omega\in \Omega}|A_{\omega}\cap xG|= \sum_{\omega\in \Omega}|G_{\omega}| =\sum_{\omega\in \Omega}\frac{|G|}{|\Omega|} = |G|,  \]
   since $G$ was assumed transitive.\end{proof}
\begin{remark} Rephrasing, the above result states that the average number of fixed points in a generating coset equals the number of common orbits. When looking at the proportion of fixed points, this result provides us with several combinatorial approaches. If $G$ is transitive, this implies that the average number of fixed points is 1.
\end{remark}\vspace{.1in}

 From this point on, unless
   otherwise specified, $A$ and $G$ are assumed transitive. For all $0\le i \le n$, define $S_i:=\{g\in xG: |Fix(g)|=i\}.$ Let $s_i:=|S_i|/|G|$. Let $r_k$ denote the number of common $(A,G)$--orbits of the component-wise actions on $\Omega^{(k)}$, the $k$-fold cartesian product with all diagonals removed. \vspace{.1in}
\begin{lemma} The following are equivalent:
\begin{enumerate}
\item[(a)] $r_2=0$;
\item[(b)] $s_0=0$;
\item[(c)] every element in the coset $xG$ fixes a unique point;
\item[(d)] every element in the coset $xG$ fixes at most one point;
\item[(e)] every element in the coset $xG$ fixes at least one
point.
\end{enumerate}\label{3}
\end{lemma}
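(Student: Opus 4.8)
The plan is to reduce all five conditions to the single assertion that every element of $xG$ fixes exactly one point, and to verify the reduction using two consequences of Lemma~\ref{2}. Write $f(g):=|Fix(g)|$ for $g\in xG$. First I would record what Lemma~\ref{2} says about the action on $\Omega$ itself: since $G$ is assumed transitive we have $r=1$, so
\[
\sum_{g\in xG} f(g) = |G|.
\]
As there are exactly $|G|$ elements in the coset $xG$, this asserts that the mean value of $f$ over the coset is $1$. Next I would apply Lemma~\ref{2} to the component-wise action on $\Omega^{(2)}$: here $G\trianglelefteq A$ with $A/G$ cyclic exactly as before, so the lemma applies verbatim, and an element $g$ fixes an ordered pair $(\alpha,\beta)$ of distinct points precisely when it fixes both coordinates, whence the number of fixed pairs equals $f(g)\bigl(f(g)-1\bigr)$. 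This gives
\[
r_2 = \frac{1}{|G|}\sum_{g\in xG} f(g)\bigl(f(g)-1\bigr).
\]

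With these two identities the equivalences are elementary facts about the nonnegative integers $f(g)$. Condition (a) reads $\sum_g f(g)\bigl(f(g)-1\bigr)=0$; since every summand is a nonnegative integer, this holds iff $f(g)\in\{0,1\}$ for all $g$, which is precisely condition (d). Condition (b), $s_0=0$, says $S_0=\emptyset$, i.e.\ $f(g)\ge 1$ for all $g$, which is precisely condition (e). It then remains to invoke the mean-$1$ constraint: if every $f(g)\le 1$ (conditions (a)/(d)), then $|G|$ integers each at most $1$ summing to $|G|$ must all equal $1$; and if every $f(g)\ge 1$ (conditions (b)/(e)), then $|G|$ integers each at least $1$ summing to $|G|$ must again all equal $1$. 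In either case every element fixes a unique point, which is condition (c), and (c) trivially implies each of (a)--(e). Chaining these yields the full cycle of equivalences.

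The argument is essentially bookkeeping, so the only genuinely substantive step is the passage to $\Omega^{(2)}$ together with the observation that Lemma~\ref{2} governs the second moment of $f$ through $r_2$. Accordingly, the one point that warrants care is confirming that $r_2$, as defined for the component-wise action on $\Omega^{(2)}$ (the $k=2$ case of the $r_k$), is exactly the ``number of common $(A,G)$-orbits'' to which Lemma~\ref{2} refers; once this is checked, the remaining deductions require no further work.
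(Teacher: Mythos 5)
Your proof is correct, and its skeleton matches the paper's: both arguments rest on Lemma \ref{2} applied to $\Omega$, which (with $G$ transitive) gives the mean-one identity $\frac{1}{|G|}\sum_{g\in xG}|Fix(g)|=1$ and hence the equivalence of (c), (d), (e), and both get (b) $\Leftrightarrow$ (e) straight from the definition of $s_0$. Where you genuinely diverge is the step (a) $\Leftrightarrow$ (d). The paper does this qualitatively, by unwinding what a common $(A,G)$-orbit on $\Omega^{(2)}$ is: $A(a,b)=G(a,b)$ holds iff some $xg$ lies in $A_a\cap A_b$, so $r_2\ne 0$ iff some element of $xG$ fixes at least two points. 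You instead apply Lemma \ref{2} to the component-wise action on $\Omega^{(2)}$ --- legitimate, since that lemma assumes only that $G\trianglelefteq A$ with $A/G$ cyclic, not transitivity --- and obtain the second factorial moment identity $r_2=\frac{1}{|G|}\sum_{g\in xG}|Fix(g)|\,\bigl(|Fix(g)|-1\bigr)$, from which (a) $\Leftrightarrow$ (d) is immediate by nonnegativity of the summands. This is exactly the $k=2$ case of \ref{e2}, which the paper only establishes in Section 3; so your route proves somewhat more than is needed (a quantitative identity rather than a yes/no criterion) and reuses Lemma \ref{2} as a black box instead of re-running an orbit argument, at the modest cost of the extra observation that the fixed points of $g$ on $\Omega^{(2)}$ number $f(g)\bigl(f(g)-1\bigr)$. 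Your closing caveat --- that the $r_2$ of the statement really is the number of common $(A,G)$-orbits on $\Omega^{(2)}$, so that Lemma \ref{2} computes it --- is the right hypothesis to check, and it holds by the paper's definition of $r_k$.
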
\vspace{.1in}

\begin{proof} Since $A$ and $G$ are transitive, by Lemma \ref{2} it follows that (c),(d) and (e) are
equivalent. Also, by the definition of $s_0$, (b) and (e) are
equivalent. For $a,b\in \Omega$, $a\ne b$ we have $A(a,b)=G(a,b)$ if and
only if there exists $g\in G$ such that $xg\in A_a\cap A_b$. Thus
$r_2\ne 0$ is equivalent to some element in $xG$ fixing at least
 two points; therefore (a) is equivalent to (d).\\
 \end{proof}
 A triple $(A,G,\Omega)$ with the above properties is called \textit{exceptional}. The name is consistent with the situation where $A$ and $G$ are respectively the arithmetic and geometric monodromy groups of an exceptional polynomial. In the following we focus on the non-exceptional case, namely $r_2\ge 1$.
\section{Combinatorics}
From the definitions, one has $s_n=1/|G|$ and $s_{n-1}=s_{n-2}=...=s_{n-\mu+1}$, where $\mu$ denotes the minimal degree of $A$. When one looks at the relation between $s_0$ and $r_2$ under the new assumptions, one proves
\begin{lemma} Assuming $r_2\ge 1$, 
\[s_0\ge
\frac{r_2}{n}+\frac{(n-2)r_2-r_3}{n(n-\mu)}.\] 
\end{lemma}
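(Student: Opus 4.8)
The plan is to convert the statement into an assertion about the \emph{factorial moments} of the fixed-point distribution on the coset $xG$ and then to bound $s_0$ by a single well-chosen linear functional. The starting point is to apply Lemma \ref{2} not to $\Omega$ but to the diagonal-free power $\Omega^{(k)}$. Since an element $g$ fixes exactly $|Fix(g)|(|Fix(g)|-1)\cdots(|Fix(g)|-k+1)$ of the ordered $k$-tuples of distinct points, Lemma \ref{2} gives, for each $k$,
\[\sum_{i=0}^n i(i-1)\cdots(i-k+1)\, s_i = r_k.\]
Taking $k=1,2,3$ (and using $r_1=1$ together with $\sum_i s_i = 1$) yields the four linear relations $\sum_i s_i = 1$, $\sum_i i\,s_i = 1$, $\sum_i i(i-1)s_i = r_2$, and $\sum_i i(i-1)(i-2)s_i = r_3$. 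The only other inputs I need are $s_i \ge 0$ and the minimal-degree constraint: a nonidentity element moves at least $\mu$ points, hence fixes at most $n-\mu$, so $s_i = 0$ for all $n-\mu < i < n$. Thus the support of $(s_i)$ is contained in $\{0,1,\dots,n-\mu\}\cup\{n\}$.

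Next I would look for a cubic test function $\phi(i)=\lambda_0+\lambda_1 i+\lambda_2\,i(i-1)+\lambda_3\,i(i-1)(i-2)$ satisfying $\phi(i)\le \delta_{i0}$ on the support. Pairing $\phi$ against $(s_i)$ and substituting the moment identities gives at once
\[s_0 = \sum_i \delta_{i0}\,s_i \ \ge\ \sum_i \phi(i)\,s_i \ =\ \lambda_0+\lambda_1+\lambda_2 r_2+\lambda_3 r_3 .\]
Matching the right-hand side of the lemma forces $\lambda_0=1$, $\lambda_1=-1$, $\lambda_2=(2n-\mu-2)/(n(n-\mu))$, and $\lambda_3=-1/(n(n-\mu))$; with these values the displayed functional reproduces $\frac{r_2}{n}+\frac{(n-2)r_2-r_3}{n(n-\mu)}$ exactly (put everything over $n(n-\mu)$). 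A short computation, factoring out $i(i-1)$, then shows $\phi(0)=1$, $\phi(1)=0$, and for $i\ge 2$
\[\phi(i) = (1-i) + \frac{i(i-1)\,(2n-\mu-i)}{n(n-\mu)},\]
so that $\phi(i)\le 0$ is equivalent to the scalar inequality $i(2n-\mu-i)\le n(n-\mu)$.

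The crux is verifying this last inequality on the support. The quadratic $i\mapsto i(2n-\mu-i)$ is a downward parabola with vertex at $i=n-\mu/2$, and it takes the value $n(n-\mu)$ at \emph{both} $i=n-\mu$ and $i=n$. Hence on $\{2,\dots,n-\mu\}$ it is increasing and bounded above by its endpoint value $n(n-\mu)$ at $i=n-\mu$, while at $i=n$ it equals $n(n-\mu)$; in particular $\phi(n)=0$, so the exact value of $s_n$ never enters. The one interval where the inequality fails is the open range $n-\mu<i<n$ surrounding the vertex — but that is precisely the range annihilated by the minimal-degree constraint, where $s_i=0$.

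I expect this interplay to be the main (and essentially only) subtle point: the two equality points $i=n-\mu$ and $i=n$ straddle the vertex of the parabola, and the interval between them, on which the test inequality is violated, is exactly the interval killed by the minimal degree. Everything else is the bookkeeping of the factorial-moment identities and the routine verification that the chosen multipliers reproduce the target expression.
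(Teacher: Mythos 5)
Your proposal is correct and is essentially the paper's own proof in different packaging: your factorial-moment identities are the paper's \eqref{e2}, and your cubic test function is exactly the linear combination the paper assembles by successive elimination (multipliers $n/2$, $(n-2)/3$, $(n-\mu)/3$), since $\delta_{i0}-\phi(i)=\frac{(i-1)(n-i)(n-\mu-i)}{n(n-\mu)}$ coincides, up to the overall factor $\frac{n(n-\mu)}{6}$, with the coefficient $\frac{(n-i)(n-\mu-i)(i-1)}{3!}$ in the paper's final display. Your parabola check that $\phi\le 0$ on the support is the same observation the paper makes implicitly --- that these coefficients are nonnegative exactly where $s_i$ can be nonzero, with the $i=n$ coefficient vanishing (a point you actually treat more carefully than the paper, which loosely asserts $s_n=0$).
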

\begin{proof} Since $A$ and
$G$ both act transitively, it follows that $r_1=1$. Furthermore,
\begin{equation}
s_0+s_1+s_2+...+s_n=1. \label{e1}
\end{equation}\\
By Lemma \ref{2} applied to $(A,G,\Omega^{(k)})$, for $1\le k\le n$, we
get \[r_k=\frac{1}{|G|}\sum_{g\in xG}|Fix(g)|=
\frac{1}{|G|}\sum_{i=0}^n\sum_{g\in S_i}|Fix(g)|=
\frac{1}{|G|}\sum_{i=k}^n
|S_i|P_i^k=\sum_{i=k}^n\frac{|S_i|}{|G|}\binom{i}{k}\cdot k!, \]
which yields
\begin{equation}
\sum_{i=k}^n\binom{i}{k}s_i = \frac{r_k}{k!}, \hspace{.6 in} 1\le k \le n.
\label{e2}
\end{equation}\\
\begin{remark}The sums actually go up to $n-\mu$ as $s_n=s_{n-1}=...=s_{n-\mu+1}=0$.
\end{remark}\vspace{.1in}
Subtracting \ref{e1} from the first equation of \ref{e2}, one gets
\begin{equation}
s_0=\sum_{i=2}^n\binom{i-1}{1}s_i. \label{e3}
\end{equation}\\
By multiplying \ref{e3} by $\frac{n}{2}$ and subtracting the third
equation of \ref{e2}, we get
\begin{equation}
\frac{ns_0}{2}-\frac{r_2}{2}=\sum_{i=2}^{n-\mu}\frac{(n-i)(i-1)}{2}s_i\ge
0. \label{e4}
\end{equation} \\ The last formula immediately implies that $s_0\ge r_2/n$, with equality if and only if
$s_2=s_3=...=0$. Since $r_2\ge 1$, there exists $(a,b)\in \Omega^{(2)}$ such that $A(a,b)=G(a,b)$. But then, there exists $g\in G$ such that
$x^{-1}(a,b)=g(a,b) \Rightarrow xg(a,b)=(a,b)\Rightarrow xg\in xG\cap A_{a,b} $; this can only happen if $x=g^{-1}$ as $s_2=s_3=...=0$, thus $A=G$  is a Frobenius group. Unless $|A|=n(n-1)$ or $|A|=n(n-1)/2$, $s_0=(n-1)/|G|>2/n$. The stabilizer of a point acts as fixed point free automorphisms of the regular normal subgroup $N$. Thus, by considering nontrivial conjugacy classes, it follows that $N$ is a $p$-elementary group with $p$ prime. Thus $n=p^a$ with $p$ odd if $|A|=n(n-1)/2$.
\\\\Multiplying the second equation of \ref{e2} by $\frac{n-2}{3}$ and
subtracting the third equation of \ref{e2},  it follows that
\begin{equation}
\frac{(n-2)r_2-r_3}{6}=\sum_{i=2}^{n-\mu}\frac{(n-i)i(i-1)}{3!}s_i\ge0.
\label{e5}
\end{equation}\\
Finally, by multiplying \ref{e4} by $\frac{n-\mu}{3}$ and
subtracting \ref{e5}, one gets
\[\frac{n-\mu}{3}(\frac{ns_0}{2}-\frac{r_2}{2})=\sum_{i=2}^{n-\mu-1}\frac{(n-i)(n-\mu-i)(i-1)}{3!}s_i\ge
0,\]thus
\begin{equation}
s_0\ge \frac{r_2}{n}+\frac{(n-2)r_2-r_3}{n(n-\mu)}.
\label{e6}
\end{equation}
\end{proof}
By \ref{e5}, if $r_2\ge 2$, then $s_0\ge \frac{2}{n}$.\\

In a similiar setting, Guralnick and Wan \cite{GW}(Lemma 3.5 and its corollary) proved that it suffices to study the case where $A$ is primitive. For the rest of this section, we shall assume $r_2=1$ and $A$
primitive. Note that this implies $G$ is transitive since otherwise the orbits of $G$ would constitute imprimitivity blocks for $A$. The bound in \ref{e6} reduces to
\begin{equation}
s_0\ge \frac{1}{n}+\frac{n-(r_3+2)}{n(n-\mu)}, \label{e7}
\end{equation}which immediately implies the following lemma.\vspace{.1in}

\begin{lemma} If $r_3+2<\mu$, then $s_0>\frac{2}{n}$.
\label{4}
\end{lemma}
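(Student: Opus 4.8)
The plan is to read the claim off directly from the already-established inequality \ref{e7}. Recall that under the standing assumptions ($r_2=1$, $A$ primitive, $G$ transitive) we have
\[
s_0\ge \frac{1}{n}+\frac{n-(r_3+2)}{n(n-\mu)}.
\]
The entire content of the lemma is to show that the hypothesis $r_3+2<\mu$ forces the second summand to be strictly larger than $1/n$, so that the two terms add to more than $2/n$.

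First I would dispose of the denominator by checking that $n-\mu>0$, which is needed for the division to preserve inequalities. This holds because $r_2=1\ne 0$: by Lemma \ref{3} this gives $s_0\ne 0$, whereas if we had $\mu=n$ (every nontrivial element a derangement) then equation \ref{e3} would yield $s_0=\sum_{i\ge 2}(i-1)s_i=0$, a contradiction. Equivalently, a primitive group with $r_2=1$ is not regular, so its point stabilizer is nontrivial and some nonidentity element fixes a point, giving $\mu\le n-1$. Either way, $n-\mu\ge 1>0$.

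The main step is then a one-line monotonicity computation. From $r_3+2<\mu$ I obtain $n-(r_3+2)>n-\mu>0$, and dividing through by the positive quantity $n-\mu$ gives
\[
\frac{n-(r_3+2)}{n-\mu}>1,
\]
hence $\dfrac{n-(r_3+2)}{n(n-\mu)}>\dfrac{1}{n}$. Substituting this into \ref{e7} produces
\[
s_0\ge \frac{1}{n}+\frac{n-(r_3+2)}{n(n-\mu)}>\frac{1}{n}+\frac{1}{n}=\frac{2}{n},
\]
which is exactly the assertion.

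I do not anticipate any genuine obstacle: inequality \ref{e7} carries all of the combinatorial weight, and what remains is elementary. The only point that merits a word of justification is the positivity of $n-\mu$, which I would argue explicitly via Lemma \ref{3} as above rather than leave it tacit, since the bound \ref{e7} is otherwise vacuous when $\mu=n$.
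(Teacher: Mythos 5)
Your main computation is exactly the paper's proof: the paper derives \ref{e7} and declares that it ``immediately implies'' this lemma, the implication being precisely your observation that $r_3+2<\mu$ gives $n-(r_3+2)>n-\mu$, hence $\frac{n-(r_3+2)}{n(n-\mu)}>\frac{1}{n}$, hence $s_0>\frac{2}{n}$. So in substance the proposal is correct and matches the paper.

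The one place you go beyond the paper --- the check that $n-\mu>0$, which the paper silently assumes already in deriving \ref{e6} --- is well-motivated but both of your justifications are flawed in the same degenerate case. First, if $A=G$ then the identity lies in the coset $xG$, so $s_n=1/|G|\ne 0$; consequently $\mu=n$ does \emph{not} force $\sum_{i\ge 2}(i-1)s_i=0$, since equation \ref{e3} then gives $s_0=(n-1)s_n=(n-1)/|G|$, which is perfectly consistent with Lemma \ref{3}. Second, the claim ``a primitive group with $r_2=1$ is not regular'' has exactly one counterexample: $A=G=\mathbb{Z}/2\mathbb{Z}$ acting on two points is primitive, regular, and has $r_2=1$, and there indeed $\mu=n=2$. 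So $n-\mu\ge 1$ does not follow from the standing assumptions alone. The repair is one line and uses the lemma's hypothesis: $\mu>r_3+2\ge 2$ rules out $n=2$; and if $\mu=n$ then $A$ is regular and primitive, hence cyclic of prime order $p$, hence $A=G$ (as $G$ is transitive) and $r_2=p-1$, so $r_2=1$ forces $n=2$, a contradiction. Thus $\mu\le n-1$ whenever the hypothesis holds, and the rest of your argument goes through verbatim.
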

Let $(a,b)$ be a representative of the common ($A,G$)--orbit on $\Omega^{(2)}$. Let $A_{a,b}$ be the stabilizer of $a$ and $b$ acting on $\Omega$ and let $r$ denote the number of orbits of this action.
\vspace{.1in}\begin{proposition} $r_3+2\le r\le n-\frac{\mu}{2}$.\label{5}
\end{proposition}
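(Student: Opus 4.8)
The plan is to prove the two inequalities separately: the upper bound $r\le n-\frac{\mu}{2}$ will come directly from Lemma \ref{1}, while the lower bound $r_3+2\le r$ will come from an explicit counting injection. For the upper bound I would apply Lemma \ref{1} to the group $A_{a,b}$ acting on $\Omega$. Every nonidentity element of $A_{a,b}$ is a nonidentity element of $A$ and hence moves at least $\mu$ points, so the minimal degree of $A_{a,b}$ is at least $\mu$; Lemma \ref{1} then gives $r\le n-\frac{\mu}{2}$ at once. The only point needing care is that $A_{a,b}\ne 1$, which is required to invoke Lemma \ref{1}. Since $(a,b)$ represents the common $(A,G)$-orbit, both $A$ and $G$ are transitive on it, whence $A=G\,A_{a,b}$ and $A_{a,b}$ surjects onto $A/G$; thus $A_{a,b}\ne 1$ whenever $A\ne G$. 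When $A=G$ the hypothesis $r_2=1$ forces $A$ to be $2$-transitive, and $A_{a,b}=1$ occurs only in the sharply $2$-transitive case, i.e.\ the extremal Frobenius group of order $n(n-1)$, which is excluded from the present discussion.

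For the lower bound I would construct an injection from the set of common $(A,G)$-orbits on $\Omega^{(3)}$ into the set of $A_{a,b}$-orbits on $\Omega\setminus\{a,b\}$. The key observation is that the coordinate projection $\pi_{12}\colon\Omega^{(3)}\to\Omega^{(2)}$ is simultaneously $A$- and $G$-equivariant, so it carries a common orbit to a common orbit; because $r_2=1$, the image of any common triple-orbit $\mathcal O$ must be the unique common pair-orbit, namely the orbit of $(a,b)$. Hence $\mathcal O$ contains a triple of the form $(a,b,c)$, and I would send $\mathcal O$ to the $A_{a,b}$-orbit of $c$. A routine verification shows this is well defined (any two triples $(a,b,c),(a,b,c')$ in the same $A$-orbit differ by an element of $A_{a,b}$, so $c,c'$ share an $A_{a,b}$-orbit) and injective (conversely, if $c,c'$ lie in one $A_{a,b}$-orbit then $(a,b,c),(a,b,c')$ lie in one $A$-orbit, hence in the same common orbit). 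Since $a$ and $b$ are distinct fixed points of $A_{a,b}$, deleting the two singleton orbits $\{a\},\{b\}$ leaves exactly $r-2$ orbits on $\Omega\setminus\{a,b\}$, so the injection yields $r_3\le r-2$, that is, $r_3+2\le r$.

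The main obstacle is the lower bound, and within it the assertion that every common orbit on triples projects onto the unique common orbit on pairs: this is where the hypothesis $r_2=1$ is used decisively, and it is precisely what pins the target of the injection to the orbits lying over $(a,b)$ rather than to arbitrary $A$-orbits on triples. Once that projection statement is in hand, well-definedness and injectivity are formal. By contrast the upper bound is essentially immediate from Lemma \ref{1}, the only subtlety being the nontriviality of $A_{a,b}$ discussed above.
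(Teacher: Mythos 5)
Your proof is correct and takes essentially the same route as the paper's: the identical injection from common $(A,G)$-orbits on $\Omega^{(3)}$ to $A_{a,b}$-orbits (using $r_2=1$ to write each common triple-orbit as $A(a,b,c_i)$, then discarding the two singleton orbits $\{a\},\{b\}$), together with Lemma \ref{1} applied to $A_{a,b}$ for the upper bound. Your explicit treatment of the nontriviality of $A_{a,b}$ is in fact more careful than the paper, which invokes Lemma \ref{1} silently even though the bound $r\le n-\frac{\mu}{2}$ genuinely fails when $A=G$ is sharply $2$-transitive (a case compatible with $r_2=1$ and primitivity, and handled elsewhere in the paper as the Frobenius case of order $n(n-1)$).
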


\begin{proof} First notice that if $(a,b,c)\in \Omega^{(3)}$ such that $A(a,b,c)=G(a,b,c)$, then  $A(a,b)=G(a,b)$ and thus for every
 $i=1,2,...,r_3$, there exists $c_i\in \Omega$ such that $(a,b,c_i)\in O_i$, where $O_1,O_2,...,O_{r_3}$ are the common ($A,G$) orbits on $\Omega^{(3)}$.
  Denote by $\{o_1,...o_r\}$ the collection of $A_{a,b}$ orbits and define a set map from $\varphi:\{O_1,O_2,...,O_{r_3}\}\rightarrow \{o_1,...o_r\} $ by
   $\varphi(A(a,b,c_i)) = A_{a,b}c_i$. Then \[\varphi(A(a,b,c_i))=\varphi(A(a,b,c_j)) \Leftrightarrow A_{a,b}c_i= A_{a,b}c_j\Leftrightarrow \exists g\in A,
    g(a,b,c_i)=(a,b,c_j),\] which is to say, $\varphi$ is a well-defined injection. Since $\{a\},\{b\} \notin Im(\varphi)$, it follows that
    $r_3+2\le r\le n-\frac{\mu}{2}$, by Lemma \ref{1}.
\end{proof}
The above result, \ref{4} and the remark following Lemma \ref{1} imply the following lemma.\vspace{.1in}
\begin{lemma}\hspace{4in}

\begin{enumerate}
\item[a)] If $\mu>\frac{2n}{3}$, then $s_0>\frac{2}{n}$;
\item[b)] If $ \mu = \frac{2n}{3}$, then $s_0>\frac{2}{n}$ unless $A_{a,b}$ is a subgroup of order 2.
\end{enumerate}
\label{6}
\end{lemma}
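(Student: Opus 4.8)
The plan is to read off both statements directly from the combinatorial bound of Lemma~\ref{4} together with the orbit-counting estimate of Proposition~\ref{5}, treating the two hypotheses on $\mu$ as two regimes for the size of $n-\frac{\mu}{2}$. Recall that Lemma~\ref{4} guarantees $s_0>\frac{2}{n}$ as soon as $r_3+2<\mu$, while Proposition~\ref{5} sandwiches $r_3+2\le r\le n-\frac{\mu}{2}$, where $r$ is the number of orbits of $A_{a,b}$ on $\Omega$. Thus the whole argument reduces to comparing $n-\frac{\mu}{2}$ with $\mu$, and the only real work is the borderline case in part b).

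For part a) I would simply substitute $\mu>\frac{2n}{3}$ into the upper bound, obtaining $n-\frac{\mu}{2}<n-\frac{n}{3}=\frac{2n}{3}<\mu$. Chaining this with Proposition~\ref{5} gives $r_3+2\le n-\frac{\mu}{2}<\mu$, hence $r_3+2<\mu$, and Lemma~\ref{4} immediately yields $s_0>\frac{2}{n}$.

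For part b) the borderline value $\mu=\frac{2n}{3}$ makes the upper bound exactly $\mu$, since $n-\frac{\mu}{2}=n-\frac{n}{3}=\mu$, so Proposition~\ref{5} now only delivers $r_3+2\le\mu$ and the strict inequality needed for Lemma~\ref{4} is no longer automatic. I would split on whether this bound is strict. If $r_3+2<\mu$, then Lemma~\ref{4} again gives $s_0>\frac{2}{n}$. Otherwise $r_3+2=\mu$, and then the entire chain of Proposition~\ref{5} collapses: $\mu=r_3+2\le r\le n-\frac{\mu}{2}=\mu$ forces $r=n-\frac{\mu}{2}$. The key observation is that Proposition~\ref{5} produces its upper bound by applying Lemma~\ref{1} to the point stabilizer $A_{a,b}$, whose minimal degree is at least $\mu$ because $A_{a,b}\le A$; the equality $r=n-\frac{\mu}{2}$ therefore forces equality in Lemma~\ref{1} applied to $A_{a,b}$, which by the remark following Lemma~\ref{1} occurs only when $|A_{a,b}|=2$. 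Consequently, unless $A_{a,b}$ is a subgroup of order $2$ we must be in the strict case $r_3+2<\mu$, and $s_0>\frac{2}{n}$.

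The delicate point is precisely this equality analysis in part b): one must keep in mind that the inequality $r\le n-\frac{\mu}{2}$ in Proposition~\ref{5} is really Lemma~\ref{1} for the group $A_{a,b}$ combined with the estimate $\mu(A_{a,b})\ge\mu$, so that the chain $r\le n-\frac{\mu(A_{a,b})}{2}\le n-\frac{\mu}{2}$ can be saturated only when both $\mu(A_{a,b})=\mu$ and equality holds in Lemma~\ref{1} for $A_{a,b}$. The remark isolating $|A|=2$ as the unique equality case must accordingly be invoked for $A_{a,b}$ rather than for $A$; once that is done correctly, everything else is a routine chaining of inequalities.
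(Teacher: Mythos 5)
Your proposal is correct and is precisely the argument the paper intends: the paper gives no separate proof of this lemma, stating only that it follows from Proposition~\ref{5}, Lemma~\ref{4}, and the remark after Lemma~\ref{1}, which is exactly the chain of inequalities and equality analysis you carry out (including the careful point that saturation requires both $\mu(A_{a,b})=\mu$ and equality in Lemma~\ref{1} applied to $A_{a,b}$, not to $A$). Your write-up is in fact more explicit than the paper's one-line justification.
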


\begin{flushleft} At this point, we need more information about $\mu$. It turns out that if the group $A$ is affine, the extra geometric structure is sufficient to fully classify all possibilities in this case. We shall handle this in the following section. We conclude this section with the following useful remark:
\end{flushleft}
\vspace*{.1in}

Under the assumption that $r_2=1$, we can derive an upper bound for $s_0$:
\[s_0\le \sum_{i=0}^ns_i\bigg (\binom{i}{0}-\binom{i}{1} +\binom{i}{2}\bigg)= 1-r_1+\frac{r_2}{2}=\frac{1}{2}.\] 
\begin{remark}
The equality in the above inequality holds if and only if $s_3=s_4=...=s_n=0$.
\label{200}
\end{remark}
\section{The Affine Case}

From this point on we are going to assume that $A$ is affine, acting on a $d$-dimensional $\mathbb{F}_q$-vector space $V$, with $n=q^d$.
 One can identify $V$ as a subgroup of $A$ (as translations) and $A=VA_0$, with  $A_0$ the isotropy group (the point stabilizer of 0) .
 
 \vspace*{.1in}
The following result follows from Lang's Theorem (\cite{GW}, Lemma 2.3)

\begin{lemma}\hspace{4in}
\begin{enumerate}

\item[a)] If $d>1$, then $\mu\ge \frac{(q-1)n}{q}$.

\item[b)] If $d=1$ and $q$ is prime, then $\mu=q-1$.

\item[c)] If $d=1$ and $q=q_0^e$ with $e$ prime and minimal, then $\mu \ge q-q_0$.
\end{enumerate}
\label{7} \end{lemma}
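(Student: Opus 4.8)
The plan is to recast all three parts as upper bounds on the number of fixed points of a nonidentity element, using $\mu = n-\max_{1\neq g\in A}|Fix(g)|$: part (a) asks for $|Fix(g)|\le q^{d-1}=n/q$, part (b) for $|Fix(g)|\le 1$, and part (c) for $|Fix(g)|\le q_0$. Writing $g=(v,\sigma)$ with linear part $\sigma\in A_0\le\Gamma L_d(q)$, the fixed set $\{x:\sigma(x)+v=x\}$ is either empty or a coset of the $\mathbb{F}_p$-subspace $\ker(\sigma-1)$, so $|Fix(g)|\le|Fix(\sigma)|$. Hence it suffices to bound the fixed points on $V=\mathbb{F}_q^d$ of a nonidentity $\sigma\in\Gamma L_d(q)$.

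First I would dispose of the $\mathbb{F}_q$-linear case: if $\sigma=M\in GL_d(q)$ with $M\neq I$, then $Fix(\sigma)=\ker_{\mathbb{F}_q}(M-I)$ is a proper $\mathbb{F}_q$-subspace, so $|Fix(\sigma)|\le q^{d-1}$. When $q$ is prime there are no nontrivial field automorphisms, so $\Gamma L_d(q)=GL_d(q)$, and for $d=1$ every $M\neq 1$ fixes only $0$; since a non-translation then fixes exactly one point (the value $q-1$ being attained because $A_0\neq 1$ in the relevant primitive, non-regular setting), this gives (b).

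The heart of the matter is the properly semilinear case $\sigma=M\theta$, where $\theta$ is the $p^{j}$-power map with $1\le j<f$ (writing $q=p^{f}$) and $M\in GL_d(q)$. Put $c=\gcd(j,f)$ and let $m=f/c$ be the order of $\theta$; since $\sigma$ is $\theta$-semilinear, $Fix(\sigma)$ is a vector space over the fixed field $\mathbb{F}_{p^{c}}$. The useful device is that $\sigma^{m}$ acts $\mathbb{F}_q$-linearly, namely as the twisted norm $N=M\,\theta(M)\cdots\theta^{m-1}(M)\in GL_d(q)$, because $x\mapsto x^{p^{jm}}$ is the identity on $\mathbb{F}_q$. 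If $N\neq I$, then $Fix(\sigma)\subseteq\ker_{\mathbb{F}_q}(N-I)$ is again a proper $\mathbb{F}_q$-subspace, so $|Fix(\sigma)|\le q^{d-1}$. If $N=I$, then $\sigma$ has order exactly $m$, and Galois descent (Hilbert 90/Speiser) for the cyclic extension $\mathbb{F}_q/\mathbb{F}_{p^{c}}$ shows $Fix(\sigma)$ is an $\mathbb{F}_{p^{c}}$-form of $V$, whence $\dim_{\mathbb{F}_{p^{c}}}Fix(\sigma)=d$ and $|Fix(\sigma)|=p^{cd}$.

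Finally I would feed these counts into elementary divisor arithmetic. Since $\theta\neq 1$, $c$ is a proper divisor of $f$, so $c\le f/2$ and more precisely $c\le f/e$, where $e$ is the least prime dividing $f$. For (a) with $d\ge 2$ this gives $cd\le (f/2)d\le f(d-1)$, hence $p^{cd}\le q^{d-1}$, matching the linear bound; combining all cases, $\max|Fix(g)|\le q^{d-1}$ and $\mu\ge(q-1)n/q$. For (c) with $d=1$, the degenerate count is $p^{c}\le p^{f/e}=q_0$, where $q_0=q^{1/e}$ is the largest proper subfield of $\mathbb{F}_q$ (which is what ``$q=q_0^{e}$ with $e$ prime and minimal'' encodes), while the case $N\neq I$ fixes only $0$; hence $\mu\ge q-q_0$. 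I expect the sole genuine obstacle to be the degenerate case $N=I$: there the number of geometric fixed points over $\overline{\mathbb{F}}_p$ is $p^{jd}$, which can exceed $q^{d-1}$, so one must count $\mathbb{F}_q$-rational points instead, and it is precisely here that Lang's theorem (as in \cite{GW}, Lemma 2.3) --- equivalently the descent statement above --- does the work.
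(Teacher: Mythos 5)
Your proof is correct, but the comparison is necessarily one-sided: the paper does not prove Lemma \ref{7} at all --- it quotes it as \cite{GW}, Lemma 2.3, saying only that it ``follows from Lang's Theorem.'' What you supply is a complete, self-contained substitute. Your skeleton --- reduce to linear parts via the observation that the fixed set of $g=(v,\sigma)$ is empty or a coset of $\ker(\sigma-1)$, so only nonidentity $\sigma\in\Gamma L_d(q)$ matter (the implicit hypothesis, as in \cite{GW}, being $A_0\le \Gamma L_d(q)$); dispose of $\mathbb{F}_q$-linear $\sigma=M\neq I$ by $|\ker_{\mathbb{F}_q}(M-I)|\le q^{d-1}$; for properly semilinear $\sigma=M\theta$ pass to the twisted norm $N=M\theta(M)\cdots\theta^{m-1}(M)$, getting $q^{d-1}$ when $N\neq I$ and, by Speiser/Hilbert 90 descent, exactly $p^{cd}=q^{d/m}$ when $N=I$; then the divisor arithmetic $c\le f/2$ for (a) and $c\le f/e$ for (c) --- is sound, and it correctly isolates what Lang's theorem is really for: Lang counts fixed points over $\overline{\mathbb{F}}_p$ (namely $p^{jd}$, too many), whereas the lemma needs the count of $\mathbb{F}_q$-rational fixed points, which is exactly a descent statement. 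Two remarks. First, your dichotomy on $N$ can be bypassed: the minimal-relation argument (apply $\sigma$ to a shortest $\mathbb{F}_q$-linear dependence among fixed vectors and subtract) shows that vectors in $Fix(\sigma)$ that are $\mathbb{F}_{p^c}$-independent are automatically $\mathbb{F}_q$-independent, so $\dim_{\mathbb{F}_{p^c}}Fix(\sigma)\le d$ and $|Fix(\sigma)|\le p^{cd}$ for every properly semilinear $\sigma$, with no case analysis; this single bound suffices for all three parts. Second, you are right to flag equality in (b): as literally stated it fails for the regular primitive group $A=V=\mathbb{Z}/p$, where $\mu=q$, so it tacitly assumes $A_0\neq 1$; since the lemma is only ever used through the lower bound $\mu\ge q-1$, this costs nothing.
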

\begin{flushleft}By Lemma \ref{6} and Lemma \ref{7} it follows
that $s_0>2/n$ unless $q\le 3$ or  $n=4$ or 9.\\
\end{flushleft}
If  $n=4$, then $A=A_4$ or $S_4$. Since the Klein 4--group is
not 2--transitive and $S_4$ and $A_4$ are, this implies that\vspace{.1in}

1. $(A,G,V)$ is exceptional or

2. $A=S_4$, $G=A_4$, $s_0=\frac{1}{2}$ or

3. $A=G=A_4$, $A$ is
Frobenius and $s_0=\frac{1}{4}$.\\\\
If $n=9$, then either $A=G$ is Frobenius, $(A,G,V)$ is exceptional,
or $s_0\ge
\frac{1}{3}$.\\\\

If $q=3$, $n=3^d>9$ by Lemmas \ref{6} and \ref{7} it follows that
either $A$ is Frobenius, $s_0>2/n$ or $x$ acts as a
reflection and $A_{0,v}$ has order 2 for any nonzero $v$ fixed by
$x$. Let $W$ denote the hyperplane fixed by $x$. For any such $v$, $x^tg(0,v)= x^tgx^{-t}(0,v)\in G(0,v)$ shows
that $A(0,v)=G(0,v)$. As $r_2=1$, for any two nonzero distinct elements $w,w'\in W$, $(0,w)$ and $(0,w')$ are contained in the common $(A,G)$--orbit. It follows that all nonzero elements of $W$ are contained in the same $A_0$--orbit. In particular $a(v)=w$ for $a\in A_0$ and $0\ne w\in W$ shows that $a^{-1}xa(v)=v$ and $a^{-1}xa(0)=0$.
Since $A_{0,v}=\{1,x\}$, the centralizer of $x$ acts transitively on
all nonzero elements of $W$. Let $u$ be a vector in the eigenspace of $-1$. As $A_0$ is irreducible on $V$
(otherwise the nontrivial $A_0$--invariant subspace of V would constitute an imprimitivity block for A), $A_0u=A_0v$. For some $a\in A_0$, $a(u)=v$, thus $u=a^{-1}xa(u)$, so this means some reflection
$x'\ne x$ centralizes $u$. As $d>2$, $x$ and $x'$ both fix some nonzero
vector $w$ in $W$. Then $A_{0,w}$ has order greater than 2 and so
does $A_{0,v}$ as $w$ and $v$ are in the same $A_0$--orbit. But this
is a contradiction.\\

In the case $n=2^d>4$, we may assume $x$ fixes 0. If $xG_0$ does not
contain a transvection (unipotent element fixing a hyperplane), then
$\mu\ge 3n/4$ and thus, by Lemma \ref{6}, $s_0>\frac{2}{n}$.
So, without loss of generality, we may assume $x$ is a transvection.
As above, if $W$ is the fixed hyperplane of $x$, as $r_2=1$, all the
nonzero vectors of $W$ are in the same $A_0$--orbit.\\

Let $H$ be the subgroup of $A_0$ generated by transvections. As all
the nonzero elements of $W$ are in the same $A_0$--orbit, for each
$w\in W\backslash\{0\}$, there is a transvection $\tau_w$ centered
at $w$. This leaves $W$ as the only candidate for a nontrivial
invariant subspace. Since $H$ is normal in $A_0$,  this implies
$A_0$ leaves $W$ invariant which is a contradiction.  It follows by
\cite{Mc} that the only irreducible subgroups of $GL_d(V)$ for which a
single orbit contains all nonzero vectors in a hyperplane are
$SL_d(V)$ or $Sp_d(V)$, with $d$ even in the last case.\\

In the first case, $A_0=SL_d(2)$ is 2--transitive on
$V\backslash\{0\}$, so $A$ is 3--transitive. Thus $r_3+2\le 1+2<4\le
\frac{n}{2}=\mu$. In the second case, $A_{0,v}$ has 3 orbits of
nonzero vectors so, by Proposition \ref{5}, $r_2+2\le 4<8\le
\frac{n}{2}=\mu$, as $n\ge
16$. Lemma \ref{4} shows that in both cases
$s_0>2/n$.

\begin{flushleft} We summarize these results in the following theorem.
\end{flushleft}

\begin{theorem} Let $A$ and $G$ be as above. Then one of the following holds:
\begin{enumerate}
\item[(a)] $r_2=0$,  $s_0=0$ and $(A,G,V)$ is exceptional; 
\item[(b)] $r_2=1$,  $s_0=1/n$ and $A=G$ is Frobenius of order $n(n-1)$ with $n=p^d$.
\item[(c)] $r_2=2$, $s_0=2/n$ and $A=G$ is Frobenius of order $n(n-1)/2$ with $n=p^d$ and $p$ is odd.
\item[(d)] $s_0>2/n$;
\item[(e)] $A=S_4$, $G=A_4$ and $s_0=2/4$.
\end{enumerate}
\label{8}
\end{theorem}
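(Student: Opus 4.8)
The plan is to run a case analysis driven entirely by the value of $r_2$, since the machinery of Sections 2--3 already converts information about $r_2$, $r_3$ and the minimal degree $\mu$ into bounds on $s_0$. First I would dispose of the two extreme regimes. If $r_2=0$, then Lemma \ref{3} gives $s_0=0$ with $(A,G,V)$ exceptional, which is conclusion (a). If $r_2\ge 2$, then (\ref{e4}) gives $s_0\ge r_2/n\ge 2/n$: either this is strict, placing us in (d), or $s_0=2/n$, which forces both $r_2=2$ and equality in (\ref{e4}), whence $s_2=s_3=\cdots=s_{n-1}=0$ and $A=G$ is a Frobenius group of order $n(n-1)/2$. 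Since the point stabilizer acts fixed-point-freely on the regular normal subgroup, that subgroup is elementary abelian of odd order, so $n=p^d$ with $p$ odd, giving (c). This isolates the substantive case $r_2=1$.

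For $r_2=1$ I would first record that primitivity of the affine group $A=VA_0$ is equivalent to irreducibility of $A_0$ on $V$, and that $G$ is automatically transitive, since otherwise its orbits would be blocks of imprimitivity for $A$. If equality holds in $s_0\ge r_2/n=1/n$, the same Frobenius analysis as above produces a sharply $2$-transitive group $A=G$ of order $n(n-1)$ with elementary abelian regular normal subgroup, hence $n=p^d$, which is conclusion (b). Otherwise $s_0>1/n$ and the goal becomes $s_0>2/n$. Here I would feed the Lang-theoretic lower bounds on $\mu$ from Lemma \ref{7} into Lemma \ref{6}: for $d>1$ we have $\mu\ge (q-1)n/q$, which exceeds $2n/3$ precisely when $q\ge 4$, while the $d=1$ bounds give $\mu>2n/3$ once the base field is large enough. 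A short arithmetic check then shows $\mu>2n/3$, hence $s_0>2/n$ by Lemma \ref{6}(a), except in the residual configurations $q\le 3$ or $n\in\{4,9\}$; the lone boundary case $\mu=2n/3$ is settled by Lemma \ref{6}(b) unless $|A_{a,b}|=2$.

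It then remains to clear the finitely many residual families. For $n=4$, irreducibility forces $A_0\le GL_2(2)\cong S_3$ to be $\mathbb{Z}/3\mathbb{Z}$ or $S_3$, so $A\in\{A_4,S_4\}$; enumerating the transitive normal subgroups $G$ with cyclic quotient and computing $s_0$ directly yields (a), (b), or (e). For $n=9$ the primitive subgroups of $AGL_2(3)$ form a short explicit list, from which one reads off that $A=G$ is Frobenius (b)/(c), the triple is exceptional (a), or $s_0\ge 1/3>2/n$ (d). For $q=3$ and $n>9$, the only possibility surviving Lemma \ref{6}(b) is $\mu=2n/3$ with $x$ a reflection and $|A_{0,v}|=2$; I would run the eigenspace argument, using $r_2=1$ to place all nonzero vectors of the fixed hyperplane $W$ in one $A_0$-orbit and irreducibility to conjugate a $(-1)$-eigenvector appropriately, producing a second reflection $x'\ne x$ whose fixed hyperplane meets $W$ (as $d>2$) in a common nonzero vector, forcing $|A_{0,w}|>2$ and a contradiction. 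For $q=2$ and $n>4$, either $xG_0$ contains no transvection, so $\mu\ge 3n/4$ and (d) follows from Lemma \ref{6}(a), or $x$ is a transvection; then all nonzero vectors of its fixed hyperplane lie in one $A_0$-orbit, the normal subgroup generated by transvections is all of $A_0$, and McLaughlin's classification \cite{Mc} gives $A_0=SL_d(2)$ or $Sp_d(2)$ with $d$ even. In the first case $A$ is $3$-transitive, so $r_3=1$; in the second, Proposition \ref{5} bounds the number of $A_{a,b}$-orbits, hence $r_3$, well below $\mu=n/2$. Either way $r_3+2<\mu$, and Lemma \ref{4} gives $s_0>2/n$, i.e. (d).

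The routine part is the arithmetic comparison of the Lemma \ref{7} bounds against $2n/3$, together with the explicit small-degree classifications for $n=4,9$. The two genuinely delicate steps---and the places I expect the main obstacle---are the linear-algebra arguments in the defining characteristic: the $q=2$ transvection case, which rests essentially on McLaughlin's theorem identifying $SL_d(2)$ and $Sp_d(2)$ as the only irreducible groups in which the nonzero vectors of a hyperplane form a single orbit, and the $q=3$ reflection case, where one must carefully exploit the interplay among $r_2=1$, irreducibility, and the order-$2$ stabilizer $A_{0,v}$ to force the contradiction. Once these are in place, every triple $(A,G,V)$ falls into one of (a)--(e).
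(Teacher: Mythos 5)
Your proposal is correct and follows essentially the same route as the paper: the same reduction via Lemmas \ref{3}, \ref{4}, \ref{6}, \ref{7} and Proposition \ref{5} to the residual cases $q\le 3$ or $n\in\{4,9\}$, the same Frobenius equality analysis for cases (b) and (c), and the same $q=3$ reflection and $q=2$ transvection/McLaughlin arguments. The only difference is organizational---you fold the Frobenius analysis of Section 3 into a case split on $r_2$ inside the proof, whereas the paper establishes it before specializing to the affine setting---but the mathematical content is identical.
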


\section{Regular Normal Nonabelian Subgroup}
In this section, we consider the case where the socle of the $A$ is regular, but not abelian. The following lemmas will provide a complete answer. \vspace{.1in}
\begin{lemma}Let $A$, $G$ be transitive subgroups with $A/G$ cyclic generated by $xG$.
Suppose that for any $xg\in xG$, we know that either $xg$ is a
derangement or $xg$ has at least $s > 1$ fixed points.   Then the
proportion of derangements in $xG$ is at least  $1 - 1/s \ge 1/2$.
\label{9}
\end{lemma}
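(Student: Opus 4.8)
The plan is to derive everything from the averaging identity of Lemma \ref{2}. Since $A$ and $G$ are both transitive, the number of common $(A,G)$--orbits on $\Omega$ is $r=1$, so Lemma \ref{2} gives
\[
\sum_{g\in xG}|Fix(g)| = |G|.
\]
In words, the total fixed-point count over the coset equals $|G|$; this is the only input from the earlier machinery that I need.

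Next I would split the coset according to the dichotomy in the hypothesis. Writing the proportion of derangements as $s_0=|S_0|/|G|$, exactly $(1-s_0)|G|$ elements of $xG$ are non-derangements, each fixing at least $s$ points. Bounding the sum from below by discarding the (zero) contribution of the derangements gives
\[
|G| = \sum_{g\in xG}|Fix(g)| \ge s\,(1-s_0)|G|.
\]
Dividing by $|G|$ yields $1 \ge s(1-s_0)$, hence $1-s_0 \le 1/s$, and therefore $s_0 \ge 1-1/s$, which is the asserted bound. The tail claim $1-1/s\ge \tfrac12$ is equivalent to $s\ge 2$; since the number of fixed points of a permutation is a nonnegative integer, the hypothesis that a non-derangement fixes \emph{more than one} point forces it to fix at least two, so one may take $s\ge 2$ and the constant $\tfrac12$ follows.

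I do not expect a genuine obstacle here: the whole argument is a one-line averaging estimate once Lemma \ref{2} is applied with $r=1$. The only point requiring a moment's care is the integrality remark in the last step, where the hypothesis $s>1$ must be upgraded to $s\ge 2$ in order to reach the clean lower bound $\tfrac12$.
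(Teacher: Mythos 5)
Your proposal is correct and is essentially identical to the paper's own proof: both apply Lemma \ref{2} with $r=1$ to get $\sum_{g\in xG}|Fix(g)|=|G|$, then bound this sum below by $s$ times the number of non-derangements and solve for the proportion of derangements. Your extra remark that integrality of fixed-point counts upgrades $s>1$ to $s\ge 2$ (justifying the constant $\tfrac12$) is a small point the paper leaves implicit, but the argument is the same.
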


\begin{proof} Let $d$ denote the number of derangements in $xG$. By Lemma \ref{2}, since both $A$ and $G$ act
transitively on $\Omega$, we have \[1= \frac{1}{|G|}\sum_{xg\in
xG}|Fix(xg)|\ge \frac{s(|G|-d)}{|G|} ,\] which, by solving for
$d/|G|$, yields the required inequality.
\end{proof}
\begin{remark}
In the above inequality, equality holds precisely when $s_0=s_2=1/2$.
\end{remark}
\begin{lemma} Let $A$ be a finite group acting transitively on a set $\Omega$.
If $N$ is a normal regular subgroup of $A$, then if $g\in A$ has a
fixed point, the number of fixed points is $|C_N(g)|$.
\label{10}
\end{lemma}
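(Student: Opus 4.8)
The plan is to exploit the regularity of $N$ to identify $\Omega$ with $N$, thereby converting the fixed-point condition for $g$ into a purely algebraic centralizer condition.

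First I would reduce to the case where $g$ fixes a prescribed base point. Fix any $\omega_0\in\Omega$; since $A$ is transitive and $g$ has a fixed point, choose $a\in A$ carrying $\omega_0$ to that fixed point and replace $g$ by $a^{-1}ga$. This changes neither side of the asserted equality: conjugation by $a$ permutes $\Omega$ and maps $Fix(g)$ bijectively onto $Fix(a^{-1}ga)$, while the normality of $N$ gives $C_N(a^{-1}ga)=a^{-1}C_N(g)a$, so $|C_N(\,\cdot\,)|$ is unchanged. Hence we may assume $g(\omega_0)=\omega_0$.

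Next, regularity of $N$ makes the evaluation map $h\mapsto h(\omega_0)$ a bijection $N\to\Omega$, so every point of $\Omega$ has the unique form $h(\omega_0)$ with $h\in N$. The core of the argument is to show that $g$ fixes $h(\omega_0)$ exactly when $h\in C_N(g)$. Using $g(\omega_0)=\omega_0$ together with the normality of $N$, I would write $g\,h(\omega_0)=(ghg^{-1})\,g(\omega_0)=(ghg^{-1})(\omega_0)$, where $ghg^{-1}\in N$; demanding that this equal $h(\omega_0)$ and invoking the freeness of the $N$-action forces $ghg^{-1}=h$, i.e.\ $h\in C_N(g)$. Conversely any $h\in C_N(g)$ clearly yields a fixed point by the same computation read backwards.

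Finally, the evaluation bijection $h\mapsto h(\omega_0)$ restricts to a bijection between $C_N(g)$ and $Fix(g)$, giving $|Fix(g)|=|C_N(g)|$. I do not anticipate a serious obstacle; the argument is short once the identification is set up. The only point requiring care is the reduction step, where one must verify that conjugating $g$ preserves both the number of fixed points and the order of the centralizer in $N$—and it is precisely the normality of $N$ that guarantees $C_N(g)$ is a subgroup of $N$ transforming correctly under conjugation.
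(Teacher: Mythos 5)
Your proof is correct and is essentially the paper's argument: both identify $Fix(g)$ with $C_N(g)$ via evaluation $h\mapsto h(\alpha)$ at a fixed point $\alpha$ of $g$, using freeness of the $N$-action for injectivity and transitivity plus normality (forcing $g^{-1}hg=h$) for surjectivity. Your opening conjugation step is harmless but unnecessary—the paper simply takes $\alpha\in Fix(g)$ as the base point directly, which achieves the same normalization.
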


\begin{proof} Let $\alpha\in Fix(g)$ be a fixed point of $g$. Let $f:C_N(g)\rightarrow
Fix(g)$ be defined by $f(x)=x(\alpha)$. \\\\
If $x\in C_N(g)\Rightarrow gx=xg\Rightarrow
g(x(\alpha))=x(g(\alpha))=x(\alpha)\Rightarrow x(\alpha)\in
Fix(g)$, so $f$ is well-defined.\\\\
 Let $x,y\in C_N(g)$ such that $f(x)= f(y)$, which means
$x(\alpha)=y(\alpha) \Leftrightarrow y^{-1}x(\alpha)=\alpha$.
Since $N$ is a regular subgroup, $x=y$ and thus $f$ is
injective.\\\\
 Let $\beta\in Fix(g)$. As $N$ is regular, and thus transitive,
there exists a (unique) element $x\in N$ such that
$x(\alpha)=\beta$. Since $\alpha, \beta \in Fix(g)$ we
have 
\[x(\alpha)=\beta\Rightarrow xg(\alpha)=g(\beta)\Rightarrow
g^{-1}xg(\alpha)=\beta.\] Since $N$ is normal $g^{-1}xg\in N$ and
as $N$ is regular $g^{-1}xg(\alpha)=\beta= x(\alpha)$ implies
$g^{-1}xg=x$, thus $x\in C_N(g)$ and therefore $f$ is
surjective. It follows that $f$ is a bijection and so
$|C_N(g)|=|Fix(g)|$.
\end{proof}
\begin{proposition} Let $N$ be a finite group and $x\in Aut(N)$ with $C_N(x)=1$ ($x$ fixes only the identity). Then $N=\{g^{-1}g^x:g\in N\}$.
\label{11}
\end{proposition}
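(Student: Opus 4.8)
The plan is to recognize that the asserted equality is exactly the statement that the map $\phi\colon N\to N$ defined by $\phi(g)=g^{-1}g^{x}$ is surjective, since $\{g^{-1}g^{x}:g\in N\}$ is precisely the image of $\phi$. Because $N$ is finite and $\phi$ is a self-map of $N$, surjectivity is equivalent to injectivity; so I would reduce the whole problem to checking that $\phi$ is injective, after which the pigeonhole principle (an injective self-map of a finite set is onto) delivers the result with no further work.

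To prove injectivity I would start from $\phi(g)=\phi(h)$, i.e. $g^{-1}g^{x}=h^{-1}h^{x}$, and manipulate this into a statement that some element is fixed by $x$. Multiplying on the left by $h$ and rearranging gives $hg^{-1}=h^{x}(g^{x})^{-1}$. Using that $x$ is an automorphism—so that it respects inverses, $(g^{x})^{-1}=(g^{-1})^{x}$, and products, $h^{x}(g^{-1})^{x}=(hg^{-1})^{x}$—this becomes $hg^{-1}=(hg^{-1})^{x}$. Setting $k=hg^{-1}$, the relation reads $k^{x}=k$, that is $k\in C_N(x)$. The hypothesis $C_N(x)=1$ then forces $k=1$, hence $h=g$, which is exactly injectivity.

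The main (indeed only) delicate point is the bookkeeping in this computation: one must keep careful track of the side on which each factor is multiplied and invoke at each step that $x$ is a genuine automorphism rather than an arbitrary set map, so that $(ab)^{x}=a^{x}b^{x}$ and $(a^{-1})^{x}=(a^{x})^{-1}$ may be used. Once the single equation $(hg^{-1})^{x}=hg^{-1}$ has been isolated, the fixed-point-freeness of $x$ finishes the argument immediately. I would then conclude by combining injectivity with the finiteness of $N$ to obtain surjectivity of $\phi$, which is the desired conclusion $N=\{g^{-1}g^{x}:g\in N\}$.
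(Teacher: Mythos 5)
Your proof is correct and follows essentially the same route as the paper's: you show the map $g\mapsto g^{-1}g^{x}$ is injective by deducing $(hg^{-1})^{x}=hg^{-1}$ from $g^{-1}g^{x}=h^{-1}h^{x}$ and invoking $C_N(x)=1$, then conclude surjectivity from finiteness. Your write-up merely makes explicit the algebraic bookkeeping that the paper compresses into one line.
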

\begin{proof}  Let $g,h\in N$ so that $h^{-1}h^x=g^{-1}g^x$. Then $hg^{-1}=(hg^{-1})^x$ and therefore $hg^{-1}=1 \Rightarrow g=h$. Thus $g\to g^{-1}g^x$ is an injective map from a finite set to itself, therefore a bijection.\\
\end{proof}
\begin{lemma} Let $N$ be a finite group and $x\in Aut(N)$ with $C_N(x)=1$.
Then $N$ is solvable.
\label{12}
\end{lemma}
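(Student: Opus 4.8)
The plan is to argue by induction on $|N|$ (equivalently, to examine a counterexample of minimal order), reducing the statement to the single fact that no finite nonabelian simple group admits a fixed-point-free automorphism.

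First I would record the key inheritance property: if $M$ is a characteristic subgroup of $N$, then $x$ restricts to an automorphism of $M$ and induces an automorphism $\bar x$ of $N/M$, and both remain fixed-point-free. For $M$ this is immediate, since $C_M(x) \subseteq C_N(x) = 1$. For the quotient this is precisely where Proposition \ref{11} enters: if $\bar g \in C_{N/M}(\bar x)$, then $g^{-1}g^x \in M$, and applying Proposition \ref{11} to the fixed-point-free automorphism $x|_M$ of $M$ produces $m \in M$ with $g^{-1}g^x = m^{-1}m^x$. A short rearrangement gives $(gm^{-1})^x = g^x (m^x)^{-1} = gm^{-1}$, so $gm^{-1}$ is fixed by $x$ and hence equals $1$; thus $g \in M$ and $\bar g = 1$, proving $\bar x$ is fixed-point-free on $N/M$.

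With this in hand, suppose $N$ is a counterexample of least order. If $N$ had a proper nontrivial characteristic subgroup $M$, then by the previous paragraph both $M$ and $N/M$ admit fixed-point-free automorphisms and are strictly smaller, hence solvable by minimality; since an extension of a solvable group by a solvable group is solvable, $N$ would be solvable, a contradiction. Therefore $N$ has no proper nontrivial characteristic subgroup, i.e. $N$ is characteristically simple, so $N \cong S^k$ for a simple group $S$. As $N$ is not solvable, $S$ is nonabelian.

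It remains to rule out a fixed-point-free automorphism of $N = S^k$ with $S$ nonabelian simple, and this step carries the real weight. Writing $\operatorname{Aut}(S^k) \cong \operatorname{Aut}(S) \wr \operatorname{Sym}(k)$, the automorphism $x$ permutes the $k$ simple direct factors (the minimal normal subgroups of $N$), inducing some $\pi \in \operatorname{Sym}(k)$; each $\pi$-orbit yields an $x$-invariant subproduct on which the induced map is again fixed-point-free, so it suffices to exhibit a nontrivial fixed element on a single orbit-block, i.e. to treat the case where $\pi$ is one cycle. A telescoping computation around the cycle then identifies $C_{S^k}(x)$ with $C_S(\beta)$, where $\beta \in \operatorname{Aut}(S)$ is the composite of the coordinate automorphisms taken around the cycle. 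The argument is completed by the fact — resting on the classification of finite simple groups — that every automorphism of a finite nonabelian simple group fixes a nontrivial element, so $C_S(\beta) \neq 1$, contradicting $C_N(x) = 1$. I expect this last, classification-dependent input to be the main obstacle, the earlier reductions being essentially formal once Proposition \ref{11} supplies the quotient step.
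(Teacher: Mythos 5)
Your proof is correct, but the decisive step is handled differently from the paper's. The two arguments share the same skeleton up to the halfway point: a minimal counterexample, together with the observation (via Proposition \ref{11}) that fixed-point-freeness passes to $x$-invariant subgroups and to their quotients, reduces everything to a characteristically simple group $N\cong S^k$ with $S$ nonabelian simple --- indeed, your quotient computation is exactly what the paper's terse phrase about the hypothesis holding ``for the $x$-invariant sections'' is hiding, so that part of your write-up usefully fills a gap. From there you diverge. You exploit the product structure: $x$ permutes the simple factors, restriction to a single cycle and the telescoping computation identify $C_{S^k}(x)$ with $C_S(\beta)$ for a single $\beta\in\mathrm{Aut}(S)$, and you finish by quoting the classification-dependent fact that $C_S(\beta)\neq 1$ for every automorphism $\beta$ of a nonabelian simple group. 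The paper never decomposes $S^k$: it forms $A=N\rtimes\langle x\rangle$, uses Proposition \ref{11} to convert the hypothesis into the statement that the coset $Nx$ is a single conjugacy class of $A$, quotes Lemma 12.1 of \cite{FGS} to produce an involution $t\in N$ with $t^N=t^A$, and runs a Frattini argument: $A=NC_A(t)$ forces some element of $Nx$ to centralize $t$, and since all elements of $Nx$ are conjugate to $x$, this gives $C_N(x)\neq 1$. The trade-off is in what gets cited: your route needs the full-strength statement that \emph{every} automorphism of \emph{every} nonabelian simple group has a nontrivial fixed point, whereas the paper's conjugacy-class trick means it only needs a fixed point for \emph{some} element of the coset $Nx$, which is what the weaker, involution-flavored lemma of \cite{FGS} delivers. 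Both inputs rest on the classification of finite simple groups, so neither argument is more elementary; yours is the more standard textbook route and is more transparent in its reductions, while the paper's localizes the CFSG dependence in a single previously published lemma.
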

\begin{proof} By contradiction, assume there exist nonsolvable groups verifying the hypothesis and pick such group $G$ with $|G|$ minimal.  Let $A$ denote the semidirect product $G \rtimes <x>$. Since $h^{-1}xh=h^{-1}xhx^{-1}x=h^{-1}h^{x^{-1}}x$, by Proposition \ref{10} (applied to $x^{-1}$) it follows that the hypothesis is equivalent to $Gx$ being a single conjugacy class in $A$. As the hypothesis holds for the $x$-invariant sections of $G$, one may assume $G$ is characteristically simple. By \cite{FGS}(Lemma 12.1), there exists an involution $t\in G$ with $t^G= t^A$ which implies $A=GC_A(t)$. Then there exists $g\in G$ such that $gx\in C_A(t)$ so $t\in C_G(gx)$ which is therefore nontrivial. As $gx$ and  $x$ are conjugates (via an element of $G$), it follows that $C_G(x)$ is nontrivial, a contradiction.
\end{proof}\vspace{.1in}
Notice that this immediately implies:\vspace{.1in}
\begin{corollary}  If $N$ is a direct product of simple nonabelian groups and $x\in Aut(N)$,
then $|C_N(x)| > 1$.
 \label{13}
\end{corollary}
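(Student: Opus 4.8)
The plan is to obtain this as an immediate consequence of Lemma \ref{12} via its contrapositive, so the only real content is to verify that the hypothesis on $N$ forces nonsolvability. Lemma \ref{12} asserts that if $x \in \mathrm{Aut}(N)$ satisfies $C_N(x) = 1$, then $N$ is solvable. Reading this as a contrapositive, if $N$ fails to be solvable, then no automorphism can centralize only the identity; that is, $|C_N(x)| > 1$ for every $x \in \mathrm{Aut}(N)$. Thus the entire task reduces to showing that a direct product of nonabelian simple groups is nonsolvable.

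First I would dispose of the trivial case $N = 1$ (the empty product), for which there is nothing to prove, and then assume $N = T_1 \times \cdots \times T_m$ with $m \ge 1$ and each $T_i$ a nonabelian simple group. Each factor $T_i$ is nonsolvable: being simple and nonabelian, its commutator subgroup $[T_i,T_i]$ is a nontrivial normal subgroup, hence equals $T_i$, so the derived series stabilizes at $T_i \neq 1$ and never reaches the identity. Since $N$ admits $T_1$ as a subgroup (and as a quotient), and solvability passes to subgroups, $N$ itself cannot be solvable.

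Combining these two observations finishes the argument: $N$ is nonsolvable, so by the contrapositive of Lemma \ref{12} any $x \in \mathrm{Aut}(N)$ has $C_N(x) \neq 1$, which is precisely $|C_N(x)| > 1$. There is no genuine obstacle here—the statement is a direct corollary—and the only point requiring a word of justification is the nonsolvability of $N$, which is exactly the structural feature (a regular \emph{nonabelian} socle) that distinguishes this section's setting from the affine case treated earlier.
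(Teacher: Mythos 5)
Your proof is correct and matches the paper's approach exactly: the paper derives Corollary \ref{13} as an immediate consequence of Lemma \ref{12} via the contrapositive, just as you do, with the nonsolvability of a direct product of nonabelian simple groups being the only (routine) point to check. The only quibble is your remark that for $N=1$ there is ``nothing to prove''---in that degenerate case the conclusion $|C_N(x)|>1$ actually fails, so one should instead read the hypothesis as requiring at least one simple factor, which is clearly the intended meaning.
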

In the setting of Lemma \ref{9},  any element $g\in A$  can be view as an element of $Aut(N)$ as acting by conjugation.  Moreover, the centralizer of $g$ in $N$ is the same as the set of fixed points of $g$ acting by conjugation on $N$. Assuming $A$ is primitive and not affine, the socle of $A$ is $H\simeq T^m$, with $T$ simple nonabelian. If $H$ is regular (one of the cases deriving from the Aschbacher--O'Nan--Scott Theorem), then\\

\begin{theorem}
Let $A$ be a primitive permutation and $G$ a normal subgroup such that $A/G$ is cyclic. Assume that the socle of $A$ is regular nonabelian. Then $s_0\ge \frac{1}{2}$.
\end{theorem}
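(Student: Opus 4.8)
The plan is to reduce the statement directly to Lemma \ref{9} by showing that every element of the coset $xG$ that is not a derangement fixes at least two points; Lemma \ref{9} applied with $s=2$ then yields $s_0\ge 1-1/2=1/2$ at once. Recall that under our hypotheses the socle $N\cong T^m$ (with $T$ simple nonabelian) is a regular normal subgroup of $A$ and, in particular, a direct product of simple nonabelian groups, so the machinery of Lemmas \ref{9}, \ref{10} and Corollary \ref{13} is available.

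First I would fix an arbitrary element $xg\in xG$. If $xg$ is a derangement there is nothing to verify. Otherwise $xg$ has a fixed point, so Lemma \ref{10} applies to the regular normal subgroup $N$ and tells us that the number of fixed points of $xg$ equals $|C_N(xg)|$, the cardinality of the centralizer of $xg$ in $N$. As noted preceding the theorem, $C_N(xg)$ is precisely the set of fixed points of the automorphism of $N$ induced by conjugation by $xg$.

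Next I would invoke Corollary \ref{13}: since $N$ is normal in $A$, conjugation by $xg$ is a genuine automorphism of $N$, and because $N$ is a direct product of simple nonabelian groups, Corollary \ref{13} guarantees $|C_N(xg)|>1$, hence $|C_N(xg)|\ge 2$. Consequently every non-derangement in $xG$ fixes at least two points, and Lemma \ref{9} with $s=2$ finishes the proof.

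The one point that needs care is the uniformity required by Lemma \ref{9}: the lemma demands a single constant $s>1$ bounding below the fixed-point count of every non-derangement in the coset. This is exactly what Corollary \ref{13} supplies, since the bound $|C_N(xg)|\ge 2$ is valid for every $xg\in xG$ with no dependence on $g$. Everything else—that the conjugation action is an automorphism of the socle and that its fixed points coincide with $C_N(xg)$—is immediate from $N$ being normal and regular, so I do not expect any genuine obstacle beyond correctly assembling the three ingredients.
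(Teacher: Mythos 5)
Your proposal is correct and is essentially identical to the paper's own argument: the paper likewise combines Lemma \ref{10} (fixed points of a non-derangement equal $|C_N(xg)|$ for the regular normal socle $N$), Corollary \ref{13} (conjugation by $xg$ is an automorphism of $N\cong T^m$ with nontrivial centralizer, so at least $2$ fixed points), and Lemma \ref{9} with $s=2$. Your explicit attention to the uniformity of $s$ is a fair observation, but it matches what the paper does implicitly.
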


On the other hand,  by Remark \ref{200}, if we assume that $r_2=1$, then it follows that $s_0=s_2=1/2$.

\newpage

\end{document}